\documentclass{amsart}

\usepackage[T1]{fontenc}
\usepackage{lmodern}
\usepackage{amsmath,amssymb,mathtools}
\usepackage[colorlinks=true,linkcolor=blue,citecolor=blue,urlcolor=blue]{hyperref}

\newtheorem{theorem}{Theorem}[section]
\newtheorem{proposition}[theorem]{Proposition}
\newtheorem{lemma}[theorem]{Lemma}

\theoremstyle{remark}

\newcommand{\Pol}{\operatorname{Pol}}
\newcommand{\Irr}{\operatorname{Irr}}
\newcommand{\Span}{\operatorname{span}}
\newcommand{\dist}{\operatorname{dist}}
\newcommand{\Ad}{\operatorname{Ad}}
\newcommand{\HS}{\operatorname{HS}}
\newcommand{\Ind}{\operatorname{Ind}}
\newcommand{\supp}{\operatorname{supp}}
\newcommand{\Tr}{\operatorname{Tr}}
\newcommand{\id}{\operatorname{id}}
\newcommand{\alg}{\mathrm{alg}}
\newcommand{\Cu}{C^{\mathrm u}}
\newcommand{\Cr}{C^{\mathrm r}}
\newcommand{\F}{\mathbb F}
\newcommand{\C}{\mathbb C}
\newcommand{\Z}{\mathbb Z}
\newcommand{\G}{\mathbb G}

\title[A counterexample to universal character density]
{A Counterexample to Universal Character Density for Compact Quantum Groups}

\hypersetup{
  pdftitle={A Counterexample to Universal Character Density for Compact Quantum Groups},
  pdfauthor={Zongjian Han},
  pdfsubject={A universal character-density counterexample for compact quantum groups},
  pdfkeywords={compact quantum group, character density, property (T), Kazhdan projection, cocycle twist}
}

\author{Zongjian Han}

\date{August 23, 2026}

\subjclass[2020]{46L89, 20G42, 22D25}
\keywords{compact quantum group, character density, cocommutative element,
property~(T), Kazhdan projection, cocycle twist}

\begin{document}

\begin{abstract}
In 1987, Woronowicz proved that the cocommutative part of the polynomial
Hopf $*$-algebra of a compact quantum group is spanned by irreducible
characters, and asked whether this remains norm dense after passage to the
ambient $C^*$-algebra. We settle the universal form of this question
negatively after thirty-nine calendar years, already for compact matrix
quantum groups of Kac type. More precisely, for
\[
 Z_{\Delta^{\mathrm u}}(\Cu(\G))
 =\{x\in \Cu(\G):\Delta^{\mathrm u}(x)=\Sigma\Delta^{\mathrm u}(x)\},
\]
we construct a compact quantum group with $\Cu(\G)=C^*(\Gamma)$ and a
Kazhdan projection $p_\eta\in Z_{\Delta^{\mathrm u}}(\Cu(\G))$ such that
\[
 \dist\!\left(p_\eta,
 \overline{\Span}\{\chi_U:U\in\Irr(\G)\}\right)\geq \frac12.
\]
The construction combines an infinite simple property-$(T)$ group with a
finite bicharacter twist. A vector state of an induced representation,
attached to a fixed-point subgroup, agrees with a one-dimensional
representation on every algebraic cocommutative element but separates it
from $p_\eta$. Moreover, $p_\eta$ lies in the kernel of the reducing
morphism. Thus the obstruction is genuinely universal and invisible in the
reduced compact quantum group, where character density holds. In
particular, Kac symmetry does not control cocommutative elements created by
the universal completion.
\end{abstract}

\maketitle

\section{Introduction}

For an ordinary compact group $K$, the identity
\[
 \Delta(f)=\Sigma\Delta(f)
\]
means that $f(st)=f(ts)$ for all $s,t\in K$, equivalently that $f$ is a
class function. The Peter--Weyl theorem, followed by conjugation averaging,
then shows that irreducible characters are uniformly dense in the continuous
class functions. The corresponding statement for compact quantum groups is
one of the basic tests of how far noncommutative harmonic analysis retains
the classical relation between matrix coefficients, characters, and class
functions.

Let $\G$ be a compact quantum group, with universal algebra $\Cu(\G)$,
universal coproduct $\Delta^{\mathrm u}$, and dense Hopf $*$-algebra
$\Pol(\G)$. If $U=(u_{ij})$ is a finite-dimensional unitary
corepresentation, its ordinary character is
$\chi_U=\sum_i u_{ii}$. In his foundational 1987 paper, Woronowicz proved
that the cocommutative part of the polynomial Hopf $*$-algebra is exactly the
linear span of the irreducible characters and then asked whether this
algebraic subspace is norm dense in the cocommutative part of the ambient
$C^*$-algebra \cite[Proposition~5.11 and the question following it]{Woronowicz1987}.
In modern universal notation, explicitly recorded by Alaghmandan and Crann
\cite[Remark~3.9]{AlaghmandanCrann2017}, the question is whether
\begin{equation}\label{eq:universal-density-question}
 Z_{\Delta^{\mathrm u}}(\Cu(\G))
 =\overline{\Span}\{\chi_U:U\in\Irr(\G)\}.
\end{equation}

The history of \eqref{eq:universal-density-question} now spans thirty-nine
calendar years. Lemeux obtained a positive result in the coamenable Kac
setting in 2015 \cite[Theorem~1.4]{Lemeux2015}. In 2017, Alaghmandan and
Crann proved the corresponding density theorem in the reduced
$C^*$-algebra for arbitrary compact quantum groups and consequently settled
the universal question for all coamenable compact quantum groups
\cite[Corollary~3.8 and Remark~3.9]{AlaghmandanCrann2017}. These results
isolated the genuinely unresolved regime: the universal completion of a
noncoamenable compact quantum group.

That remaining regime is structurally significant. The polynomial Hopf
$*$-algebra and the reduced completion only detect the part seen by the Haar
state, whereas the universal completion retains every unitary
representation. Thus \eqref{eq:universal-density-question} asks whether
completion can create new cocommutative elements that are invisible both to
the Peter--Weyl core and to the reduced regular representation. A positive
answer would have shown that cocommutativity is rigid under universal
completion. A negative answer must therefore exhibit an element that is
simultaneously universal, cocommutative, and separated in norm from all
algebraic characters.

We construct precisely such an element. The counterexample is a compact
matrix quantum group of Kac type, so the failure cannot be attributed to a
nontracial Haar state, modular theory, or the absence of finite-dimensional
generators. The obstruction is quantitative: a cocommutative projection
remains at distance at least $1/2$ from the closed character span. It is also
sharply localized: the projection belongs to the kernel of the reducing
morphism. Hence the reduced theorem remains valid, while the universal
statement fails. This identifies the exact boundary that the earlier
positive results could not cross.

The construction combines three mechanisms. First, property~$(T)$ produces
central Kazhdan projections in a full group $C^*$-algebra that disappear
under the regular representation. Second, a finite cocycle twist makes the
ambient compact quantum group noncocommutative while preserving the
cocommutativity of selected Kazhdan projections and reducing the algebraic
cocommutative subspace. Third, a concrete induced state agrees with a
one-dimensional representation on every algebraic cocommutative element but
takes a different value on the chosen Kazhdan projection.

Here is the precise result. All tensor products of $C^*$-algebras below are
minimal.

\begin{theorem}\label{thm:main}
There exist a finitely generated discrete group $\Gamma$ with a quotient
$\Gamma\twoheadrightarrow H=(\Z/2\Z)^2$, a unitary two-cocycle
$\Omega\in\C[\Gamma]\otimes\C[\Gamma]$, and a compact matrix quantum group
\[
 \G_\Omega=(C^*(\Gamma),\Delta_\Omega),
 \qquad
 \Delta_\Omega(x)=\Omega\Delta_0(x)\Omega^*,
\]
with the following properties.
\begin{enumerate}
\item[(i)] $\G_\Omega$ is noncocommutative and of Kac type.
\item[(ii)] For every nontrivial $\eta\in\widehat H$, viewed as a character
of $\Gamma$ through the quotient above, there is a nonzero central projection
$p_\eta\in C^*(\Gamma)$ such that
\[
 \Delta_\Omega(p_\eta)
 =\sum_{\alpha\beta=\eta}p_\alpha\otimes p_\beta
 =\Sigma\Delta_\Omega(p_\eta).
\]
\item[(iii)] If
\[
 \mathcal K_\Omega
 =\overline{\Span}\{\chi_U:U\in\Irr(\G_\Omega)\},
\]
then
\[
 \dist(p_\eta,\mathcal K_\Omega)\geq\frac12.
\]
\item[(iv)] For the reducing morphism
$\lambda:C^*(\Gamma)\to C_r^*(\Gamma)$, one has $\lambda(p_\eta)=0$.
\end{enumerate}
Consequently, universal character density fails for compact quantum groups,
even in the Kac case.
\end{theorem}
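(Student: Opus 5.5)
The plan is to build $\Gamma$ from an infinite simple property-$(T)$ group $\Gamma_0$ and a copy of $H=(\Z/2\Z)^2$ that acts on it. The two-cocycle should be the dual of a bicharacter on $\widehat H$, supported on a finite abelian subgroup. The separation in (iii) should come from a single state that cannot tell characters apart from a one-dimensional representation.

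\emph{Construction.} Let $\Gamma_0$ be a finitely generated infinite simple group with property $(T)$. Put $\Gamma=\Gamma_0^{\,n}\rtimes H$, where $H$ acts by permuting coordinates (or by automorphisms) and the splitting gives a subgroup $H\subset\Gamma$. This makes $H$ noncentral, and the projection onto $H$ is the required quotient $\Gamma\twoheadrightarrow H$.

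Let $e_\chi\in\C[H]$, for $\chi\in\widehat H$, be the minimal projections. Fix a nondegenerate antisymmetric bicharacter $\sigma$ on $\widehat H$ and set $\Omega=\sum_{\chi,\chi'}\sigma(\chi,\chi')\,e_\chi\otimes e_{\chi'}$. The bicharacter property gives the cocycle identity for $\Omega$ with respect to $\Delta_0$, and $\Omega$ is unitary.

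\emph{Part (i).} Because $H$ is noncentral, $\Omega$ does not commute with $\Delta_0(C^*(\Gamma))$. A direct check on a group element $g$ moved by $H$ then shows $\Delta_\Omega\neq\Sigma\Delta_\Omega$. Twisting by a unitary cocycle leaves the Haar state $\tau(g)=\delta_{g,e}$ unchanged, and $\tau$ is tracial, so $\G_\Omega$ is of Kac type. The finite generating set of $\Gamma$ gives a fundamental corepresentation, so $\G_\Omega$ is a compact matrix quantum group.

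\emph{Part (ii).} Let $p_0\in C^*(\Gamma_0^n)$ be the Kazhdan projection, which is $H$-invariant, and set $p_\eta=p_0\,e_\eta$. This is a nonzero central projection: in any representation it projects onto the vectors that are $\Gamma_0^n$-invariant and $H$-isotypic of type $\eta$. Write $\Delta_0(p_\eta)$ in the form $\sum_{\alpha\beta=\eta}p_\alpha\otimes p_\beta$ and compare it with $\Delta_\Omega$. Since the $p_\alpha$ are multiples of the $e_\alpha$ and commute with $\Omega$, this formula is unchanged by the twist, and its symmetry in $(\alpha,\beta)$ gives cocommutativity. Verifying the formula for $\Delta_0(p_\eta)$ exactly is the point to check most carefully; a fallback is to identify the relevant representations through $\Gamma\to H$.

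\emph{Part (iv).} The regular representation of the infinite group $\Gamma_0^n$ has no invariant vectors, so $\lambda(p_0)=0$ and hence $\lambda(p_\eta)=0$.

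\emph{Part (iii), the main step.} I will produce a state $\varphi$ on $C^*(\Gamma)$ and a character $\varepsilon$ of $\Gamma$ with
\[
\varphi=\varepsilon \text{ on } \Span\{\chi_U\},\qquad \varphi(p_\eta)=0,\qquad \varepsilon(p_\eta)=1 .
\]
For any $k\in\mathcal K_\Omega$ this gives $1=|(\varepsilon-\varphi)(p_\eta-k)|\le 2\|p_\eta-k\|$, so $\dist(p_\eta,\mathcal K_\Omega)\ge\tfrac12$.

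\begin{itemize}
\item Take $\varepsilon=\eta$, trivial on $\Gamma_0^n$.
\item Take $\varphi(x)=\langle\pi(x)\delta_{\Lambda},\delta_\Lambda\rangle$ for $\pi=\Ind_\Lambda^\Gamma(\eta|_\Lambda)$. Here $\Lambda$ is the fixed-point subgroup of the $H$-action (diagonal $\Gamma_0$ times $H$), which has infinite index.
\end{itemize}

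Then $\varphi(g)=\eta(g)\mathbf 1_\Lambda(g)$, and $\pi$ has no $\Gamma_0^n$-invariant vectors, so $\varphi(p_\eta)=0$.

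The obstacle is to show that $\varphi$ and $\eta$ agree on every irreducible character of $\G_\Omega$. My approach is as follows.
\begin{itemize}
\item Twisting does not change the underlying coalgebra. So I will use the Woronowicz description of the cocommutative part as the elements $x\in\C[\Gamma]$ with $\Omega\Delta_0(x)\Omega^*$ symmetric.
\item Expand $x=\sum_g c_g g$ in the $e_\chi$-decomposition. The aim is that twisted cocommutativity forces $c_g=0$ for every $g$ outside $\Lambda$ whose coset is moved by $H$; the nondegenerate $\sigma$ should supply this through phases $\sigma(\chi,\chi')\neq\sigma(\chi',\chi)$.
\item Then $x$ is supported on $\Lambda$ together with a part on which $\eta$ vanishes after averaging, and $\varphi(x)=\eta(x)$ follows.
\end{itemize}
If this support computation fails, I would first enlarge $n$ or change the $H$-action until the twist kills every non-fixed coset.
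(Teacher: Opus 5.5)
\textbf{There is a genuine gap: with your choice $\Lambda=N^H\rtimes H$, the functional $\varphi$ does not agree with $\eta$ on the characters, so step (iii) fails.} The paper instead lets the subgroup depend on $\eta$.

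Suppose the twist has a nondegenerate commutator $c(\alpha,\beta)=\overline{\sigma(\alpha,\beta)}\sigma(\beta,\alpha)$. Then $\Delta_\Omega$-cocommutativity of $x$ is equivalent to $[\Delta_0(x),\Omega^*\Omega_{21}]=0$. A block computation of $\Delta_0(e_\eta u_ne_\eta)=\sum_{\alpha\beta=\mu\nu=\eta}e_\alpha u_ne_\mu\otimes e_\beta u_ne_\nu$ then shows the following. The element $E_n=e_\eta u_ne_\eta$ is cocommutative if and only if $n$ is fixed by the single element $k_\eta\in H$ defined by $c(\delta,\eta)=\delta(k_\eta)$. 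In fact $e_\eta Z_{\Delta_\Omega}(\C[\Gamma])e_\eta=e_\eta\C[N^{k_\eta}]e_\eta$.

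For the regular action on $\Gamma_0^4$, $N^{k_\eta}\cong\Gamma_0^2$ is strictly larger than the diagonal $N^H$. Take $n\in N^{k_\eta}\setminus N^H$:
\begin{itemize}
\item $E_n$ is cocommutative, hence a finite linear combination of irreducible characters;
\item $\eta(E_n)=1$;
\item every group element in the support of $E_n$ has $N$-component in $H\cdot n$, which is disjoint from $N^H$, so $\varphi(E_n)=0$.
\end{itemize}

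Your intended global support statement is also false. For the trivial character, $k=0$, so $e_1u_ne_1$ is cocommutative for every $n$. Your fallback of changing the action until the twist kills every coset moved by $H$ cannot work either. The twist only detects whether $k_\eta\in H_n$, so you would need $N^{k}=N^H$ for all three nonzero $k$. For coordinate permutations this forces the trivial action, which destroys the infinite index.

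The missing idea has two parts:
\begin{itemize}
\item Take the $\eta$-dependent subgroup $L=N^{k_\eta}\rtimes H$. Still $[N:N^{k_\eta}]=\infty$, so $\varphi(p_\eta)=0$ survives.
\item Replace support control by a corner reduction. Both $\eta$ and $\varphi$ equal $1$ on $e_\eta$, hence $\psi(x)=\psi(e_\eta xe_\eta)$ for both. Since $e_\eta Ze_\eta\subset\C[L]$ and $\eta=\varphi$ on $\C[L]$, they agree on all cocommutative polynomial elements.
\end{itemize}

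Three further points need repair.

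\textbf{The twist as written is trivial.} Every bicharacter on $\widehat H\cong\F_2^2$ is $\pm1$-valued, so an antisymmetric one is symmetric. Then $\Omega_{21}=\Omega$ and $\Sigma\Delta_\Omega=\Delta_\Omega$ on all of $C^*(\Gamma)$, so (i) and (iii) both fail. Noncentrality of $H$ is not the relevant test. You need a non-symmetric cocycle whose commutator is the nondegenerate alternating form, e.g.\ $\sigma((a,b),(c,d))=(-1)^{bc}$, with commutator $(-1)^{bc+ad}$.

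\textbf{Part (ii) is missing an input.} Your $p_\eta=p_0e_\eta$ is the right projection: it projects onto the $\eta$-eigenvectors of $\Gamma$. Writing $\Delta_0(p_\eta)=\Delta_0(p_0)\Delta_0(e_\eta)$ is a clean route, but it needs $\Delta_0(p_0)=p_0\otimes p_0$. That is, the $N$-invariant vectors of $\pi\otimes\rho$ must be exactly $\mathcal H_\pi^N\otimes\mathcal H_\rho^N$. This requires that $N$ have no nontrivial finite-dimensional unitary representations, fed through the Hilbert--Schmidt approximation argument. This holds here (e.g.\ by Malcev, since $\Gamma_0$ is infinite, simple and finitely generated), but property (T) alone does not give it.

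\textbf{Haar-state invariance needs a check.} Invariance of $\tau$ is not automatic for cocycle twists. You need to verify that $\tau(e_\alpha u_ge_\mu)=0$ for $g\notin H$.
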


The group and twist are explicit once one chooses an infinite finitely
generated simple group $\Gamma_0$ with property~$(T)$. Put
\[
 H=(\Z/2\Z)^2,
 \qquad
 N=\Gamma_0^H,
 \qquad
 \Gamma=N\rtimes H,
\]
where $H$ permutes the four factors of $N$ by its regular action. The twist
comes from the normalized two-cocycle
\[
 \omega((a,b),(c,d))=(-1)^{bc}
 \qquad
 ((a,b),(c,d)\in\widehat H\cong\F_2^2).
\]

The proof is organized so that the only group-specific computation is the
corner identity in Proposition~\ref{prop:corner}. That identity also produces
the separating functional. Sections~2 and~3 construct the twist and the
Kazhdan projections; Section~4 identifies the relevant algebraic corner;
Section~5 obtains the quantitative norm separation; and Section~6 explains
why the obstruction vanishes exactly after reduction.

\section{Cocommutative elements and a finite twist}

For a coproduct $\Delta:A\to A\otimes A$, write
\[
 Z_\Delta(A)=\{x\in A:\Delta(x)=\Sigma\Delta(x)\},
 \qquad
 \Sigma(a\otimes b)=b\otimes a.
\]
The following coefficient computation records the algebraic part of the
character-density problem.

\begin{lemma}[Algebraic cocommutativity]\label{lem:algebraic-cocommutativity}
For every compact quantum group $\G$,
\[
 Z_\Delta(\Pol(\G))=\Span\{\chi_U:U\in\Irr(\G)\}.
\]
\end{lemma}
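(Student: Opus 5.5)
The plan is to use the Peter--Weyl decomposition of $\Pol(\G)$ and reduce the statement to a linear-algebra fact about matrix coefficients of irreducibles. Fix for each class in $\Irr(\G)$ a unitary representative $U^\alpha=(u^\alpha_{ij})_{i,j=1}^{n_\alpha}$. By the Peter--Weyl theorem for compact quantum groups (Woronowicz), the family $\{u^\alpha_{ij}\}$ is a linear basis of $\Pol(\G)$, and $\Delta(u^\alpha_{ij})=\sum_k u^\alpha_{ik}\otimes u^\alpha_{kj}$. Since $\Delta$ is linear and preserves each coefficient block, both $\Delta$ and $\Sigma\Delta$ map the block $\mathcal C_\alpha=\Span\{u^\alpha_{ij}\}$ into $\mathcal C_\alpha\otimes\mathcal C_\alpha$. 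The tensors $u^\alpha_{ik}\otimes u^\beta_{lj}$ are linearly independent in $\Pol(\G)\odot\Pol(\G)$, so the condition $\Delta(x)=\Sigma\Delta(x)$ splits block by block. Hence it suffices to show that $Z_\Delta(\Pol(\G))\cap\mathcal C_\alpha=\C\chi_{U^\alpha}$ for each $\alpha$.

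Inclusion ``$\supseteq$'' is immediate: $\Delta(\chi_U)=\sum_{i,k}u_{ik}\otimes u_{ki}$, and this sum is invariant under the flip after exchanging $i$ and $k$. For ``$\subseteq$'', I will write $x=\sum_{i,j}a_{ij}u^\alpha_{ij}$ with $A=(a_{ij})\in M_{n_\alpha}(\C)$. Then
\[
\Delta(x)=\sum_{i,j,k}a_{ij}\,u_{ik}\otimes u_{kj},\qquad
\Sigma\Delta(x)=\sum_{i,j,k}a_{ij}\,u_{kj}\otimes u_{ik}.
\]
Comparing the coefficients of the basis tensor $u_{pq}\otimes u_{rs}$ gives $\delta_{qr}a_{ps}=\delta_{sp}a_{rq}$ for all $p,q,r,s$. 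Taking $q=r$ and $p\neq s$ yields $a_{ps}=0$, so $A$ is diagonal. Taking $q=r$ and $s=p$ yields $a_{pp}=a_{qq}$, so $A$ is scalar and $x=a\,\chi_{U^\alpha}$.

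The one point needing care, and the main obstacle, is the linear independence of the tensors $u^\alpha_{ik}\otimes u^\beta_{lj}$. This must hold in the algebraic tensor product, which embeds into $C(\G)\otimes C(\G)$ for any completion, so the equation $\Delta(x)=\Sigma\Delta(x)$ may be read algebraically. The independence follows because $\{u^\alpha_{ij}\}$ is a basis of $\Pol(\G)$, a consequence of the Haar state orthogonality relations. No Kac assumption is needed, since only linear independence is used and not the specific form of the orthogonality constants.
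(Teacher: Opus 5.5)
Your proposal is correct and follows essentially the same route as the paper. Both arguments use Peter--Weyl to reduce to a single irreducible coefficient block, then compare the coefficients of $u_{pq}\otimes u_{rs}$ to get $\delta_{qr}a_{ps}=\delta_{sp}a_{rq}$, which forces the coefficient matrix to be scalar. You also spell out two points the paper leaves implicit: the easy inclusion $\chi_U\in Z_\Delta(\Pol(\G))$, and the linear-independence justification for splitting the condition block by block.
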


\begin{proof}
By the Peter--Weyl decomposition, it is enough to work in the coefficient
coalgebra of one irreducible unitary corepresentation
$U=(u_{ij})_{i,j=1}^d$. Write $x=\sum_{i,j}c_{ij}u_{ij}$. Since
$\Delta(u_{ij})=\sum_k u_{ik}\otimes u_{kj}$, comparison of the coefficient
of $u_{ab}\otimes u_{cd}$ in $\Delta(x)$ and in $\Sigma\Delta(x)$ gives
\begin{equation}\label{eq:coefficient-comparison}
 \delta_{bc}c_{ad}=\delta_{ad}c_{cb}
 \qquad (1\leq a,b,c,d\leq d).
\end{equation}
Taking $b=c$ and $a\neq d$ shows that every off-diagonal coefficient
vanishes. Taking $a=d$ and $b=c$ shows that all diagonal coefficients are
equal. Hence $x$ is a scalar multiple of $\chi_U$. Summing over the
irreducible coefficient coalgebras proves the assertion.
\end{proof}

We now describe the finite twist. Regard $H=\F_2^2$ as an additive group and
identify its dual with $\F_2^2$ through
\[
 \chi_{(a,b)}(x,y)=(-1)^{ax+by}.
\]
For $\alpha\in\widehat H$, let
\begin{equation}\label{eq:fourier-projections}
 e_\alpha=\frac1{|H|}\sum_{h\in H}\overline{\alpha(h)}u_h\in\C[H].
\end{equation}
These are the minimal Fourier projections of $\C[H]$. Define
\begin{equation}\label{eq:twist}
 \omega((a,b),(c,d))=(-1)^{bc},
 \qquad
 \Omega=\sum_{\alpha,\beta\in\widehat H}
 \omega(\alpha,\beta)e_\alpha\otimes e_\beta.
\end{equation}
The scalar function $\omega$ is a normalized two-cocycle on $\widehat H$.
Consequently $\Omega$ is a normalized unitary two-cocycle for the group
coproduct $\Delta_0(u_g)=u_g\otimes u_g$; in particular,
\[
 (\Omega\otimes1)(\Delta_0\otimes\id)(\Omega)
 =(1\otimes\Omega)(\id\otimes\Delta_0)(\Omega).
\]
The cocycle deformation
\begin{equation}\label{eq:twisted-coproduct}
 \Delta_\Omega(x)=\Omega\Delta_0(x)\Omega^*
\end{equation}
is therefore coassociative. This is the finite-subgroup form of the standard
cocycle deformation construction; compare
\cite[Corollary~2 and the following example]{Fima2010}. In our case
$\Omega\in\C[\Gamma]\otimes\C[\Gamma]$, where the tensor product in this
sentence is algebraic. The Drinfel'd-twist formulas make
$(\C[\Gamma],\Delta_\Omega)$ a Hopf $*$-algebra with unchanged
multiplication, involution, and counit, and with a twisted antipode. Its two
Hopf--Galois maps are therefore bijective, so
\begin{equation}\label{eq:cancellation-algebraic}
\begin{aligned}
 \Span\{\Delta_\Omega(a)(1\otimes b):a,b\in\C[\Gamma]\}
 &=\C[\Gamma]\otimes\C[\Gamma]\\
 &=\Span\{\Delta_\Omega(a)(b\otimes1):a,b\in\C[\Gamma]\}.
\end{aligned}
\end{equation}
The group coproduct
$\Delta_0:C^*(\Gamma)\to C^*(\Gamma)\otimes C^*(\Gamma)$ is the integrated
form of the unitary representation $g\mapsto u_g\otimes u_g$. Hence
$\Delta_\Omega=\Ad(\Omega)\circ\Delta_0$ extends to a unital
$*$-homomorphism on $C^*(\Gamma)$. Taking norm closures in
\eqref{eq:cancellation-algebraic} proves the Woronowicz cancellation
conditions. Thus $(C^*(\Gamma),\Delta_\Omega)$ is a compact quantum group.

The commutator bicharacter of $\omega$ is
\begin{equation}\label{eq:commutator-bicharacter}
 c(\alpha,\beta)
 =\overline{\omega(\alpha,\beta)}\omega(\beta,\alpha),
 \qquad
 C=\Omega^*\Omega_{21}
 =\sum_{\alpha,\beta}c(\alpha,\beta)e_\alpha\otimes e_\beta.
\end{equation}
Explicitly,
\begin{equation}\label{eq:explicit-bicharacter}
 c((a,b),(c,d))=(-1)^{bc+da}.
\end{equation}
Thus $c$ is an alternating nondegenerate bicharacter on $\widehat H$.

\begin{lemma}[Twist criterion]\label{lem:twist-criterion}
For $x\in C^*(\Gamma)$,
\[
 \Delta_\Omega(x)=\Sigma\Delta_\Omega(x)
 \quad\Longleftrightarrow\quad
 [\Delta_0(x),C]=0.
\]
\end{lemma}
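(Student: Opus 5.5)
We want $\Delta_\Omega(x)=\Sigma\Delta_\Omega(x)$ if and only if $\Delta_0(x)$ commutes with $C$. The plan is to rewrite both sides of the left-hand equation using the cocommutativity of the untwisted coproduct and then conjugate by unitaries. First we record the three facts the argument rests on.

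First, $\Delta_0$ is cocommutative on all of $C^*(\Gamma)$. Indeed, $\Sigma\Delta_0(u_g)=u_g\otimes u_g=\Delta_0(u_g)$, and $\Sigma$ is a continuous $*$-automorphism of the minimal tensor product. By linearity and continuity, $\Sigma\Delta_0=\Delta_0$ everywhere.

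Second, $\Sigma$ is a $*$-automorphism. Hence
\[
 \Sigma\Delta_\Omega(x)=\Sigma(\Omega)\,\Sigma\Delta_0(x)\,\Sigma(\Omega)^*=\Omega_{21}\Delta_0(x)\Omega_{21}^*.
\]

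Third, $\Omega$ and $\Omega_{21}$ are unitaries. Each is a sum of the form $\sum\omega(\alpha,\beta)e_\alpha\otimes e_\beta$ with unimodular scalars $\omega(\alpha,\beta)$. The $e_\alpha\otimes e_\beta$ are mutually orthogonal projections summing to $1\otimes1$, because the $e_\alpha$ are the minimal projections of $\C[H]\subset\C[\Gamma]$ and $\sum_\alpha e_\alpha=1$.

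With these facts, the twisted cocommutativity $\Omega\Delta_0(x)\Omega^*=\Omega_{21}\Delta_0(x)\Omega_{21}^*$ is equivalent to
\[
 \Omega^*\Omega_{21}\,\Delta_0(x)=\Delta_0(x)\,\Omega^*\Omega_{21}.
\]
To see this, multiply on the left by $\Omega^*$ and on the right by $\Omega_{21}$. The operation is reversible because both elements are unitary. The resulting condition is $C\Delta_0(x)=\Delta_0(x)C$, i.e. $[\Delta_0(x),C]=0$.

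It remains to confirm the formula for $C$ stated in \eqref{eq:commutator-bicharacter}. Since $e_\alpha e_{\alpha'}=\delta_{\alpha\alpha'}e_\alpha$, the products of the two orthogonal expansions collapse to
\[
 \Omega^*\Omega_{21}=\sum_{\alpha,\beta}\overline{\omega(\alpha,\beta)}\,\omega(\beta,\alpha)\,e_\alpha\otimes e_\beta=\sum_{\alpha,\beta}c(\alpha,\beta)\,e_\alpha\otimes e_\beta .
\]
Here $\Omega_{21}=\sum\omega(\beta,\alpha)e_\alpha\otimes e_\beta$ after relabelling.

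None of these steps is a real obstacle. The only points needing care are that $\Sigma$ extends to the minimal tensor product, which it does, and that the flip of $\Omega$ is computed with the correct index order.
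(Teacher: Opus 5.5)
Your proof is correct and follows the paper's argument. You use cocommutativity of $\Delta_0$ to write $\Sigma\Delta_\Omega(x)=\Omega_{21}\Delta_0(x)\Omega_{21}^*$, then multiply by $\Omega^*$ on the left and $\Omega_{21}$ on the right to get $C\Delta_0(x)=\Delta_0(x)C$; your added checks (unitarity of $\Omega$, the flip on the minimal tensor product, and the explicit formula for $C$) only make explicit what the paper leaves implicit.
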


\begin{proof}
The original group coproduct is cocommutative, and hence
\[
 \Sigma\Delta_\Omega(x)=\Omega_{21}\Delta_0(x)\Omega_{21}^*.
\]
Equating this expression with \eqref{eq:twisted-coproduct}, multiplying on the
left by $\Omega^*$, and multiplying on the right by $\Omega_{21}$ gives
$\Delta_0(x)C=C\Delta_0(x)$.
\end{proof}

\section{The property-\texorpdfstring{$(T)$}{(T)} group}

Choose $\Gamma_0$ among the infinite finitely generated simple groups with
property~$(T)$ supplied by \cite[Corollary~21]{CapraceRemy2009}. By
\cite[Proposition~25(ii)]{CapraceRemy2009}, every homomorphism from
$\Gamma_0$ to a compact group is trivial. In particular,
\begin{equation}\label{eq:no-finite-dimensional-reps}
 \text{every finite-dimensional unitary representation of $\Gamma_0$ is trivial.}
\end{equation}
Let
\begin{equation}\label{eq:semidirect-product}
 N=\Gamma_0^H=\prod_{r\in H}\Gamma_0,
 \qquad
 \Gamma=N\rtimes H,
\end{equation}
where the action is the regular coordinate permutation
\begin{equation}\label{eq:regular-action}
 (h\cdot n)_r=n_{h^{-1}r}
 \qquad
 (h,r\in H,\ n=(n_r)_{r\in H}\in N).
\end{equation}
The group $\Gamma$ is finitely generated and has property~$(T)$, because
these properties are preserved by finite products and finite extensions.
Moreover, \eqref{eq:no-finite-dimensional-reps} gives the complete list of
its finite-dimensional irreducible representations.

\begin{lemma}\label{lem:finite-dimensional-reps}
Every finite-dimensional unitary representation of $\Gamma$ factors through
the quotient $\Gamma\to H$. Consequently the finite-dimensional irreducible
representations of $\Gamma$ are precisely the four characters $\widehat H$,
extended trivially over $N$.
\end{lemma}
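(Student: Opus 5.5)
Let $\pi:\Gamma\to U(\mathcal H)$ be a finite-dimensional unitary representation. The plan is to show first that $\pi|_N$ is trivial; then $\pi$ factors through $\Gamma/N\cong H$, and the irreducible ones are characters of $H$.

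For the first step, look at each coordinate copy $\Gamma_0^{(r)}\subset N$, the subgroup of elements supported at the coordinate $r$. Restricting $\pi$ to $\Gamma_0^{(r)}\cong\Gamma_0$ gives a finite-dimensional unitary representation of $\Gamma_0$. By \eqref{eq:no-finite-dimensional-reps} this restriction is trivial. Since $N$ is generated by the four coordinate copies, $\pi(n)=1$ for every $n\in N$. An equivalent route is to note that $\pi(\Gamma_0^{(r)})$ lies in the compact group $U(\mathcal H)$ and invoke \cite[Proposition~25(ii)]{CapraceRemy2009} directly.

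Hence $\pi$ descends to a representation of $\Gamma/N\cong H$. Conversely, composing any representation of $H$ with the quotient map gives a representation of $\Gamma$ that is trivial on $N$. This gives a bijection between finite-dimensional unitary representations of $\Gamma$ and those of $H$, and it preserves irreducibility and equivalence. The quotient map is surjective, so invariant subspaces and intertwiners are the same on both sides.

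Since $H=(\Z/2\Z)^2$ is abelian, its irreducible unitary representations are exactly the four characters in $\widehat H$. These are pairwise inequivalent, so they give four distinct irreducible representations of $\Gamma$, each trivial on $N$.

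There is no real obstacle. The only points to check are that $N$ is generated by its coordinate subgroups, which is immediate for a finite direct product, and that the restriction to each coordinate subgroup really is a representation of $\Gamma_0$, which is clear.
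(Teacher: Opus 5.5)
Your proposal is correct and follows the paper's proof: restrict to each coordinate copy of $\Gamma_0$, use the absence of nontrivial finite-dimensional unitary representations of $\Gamma_0$ to get triviality on $N$, factor through $H$, and use that $H$ is finite abelian. The extra details you supply are routine points the paper leaves implicit: that $N$ is generated by its coordinate subgroups, and that irreducibility and equivalence are preserved under the surjective quotient.
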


\begin{proof}
The restriction of a finite-dimensional unitary representation to each
coordinate copy of $\Gamma_0$ is trivial by
\eqref{eq:no-finite-dimensional-reps}. It is therefore trivial on $N$ and
factors through $H$. Since $H$ is finite abelian, all its irreducible
representations are characters.
\end{proof}

For every $\eta\in\widehat H$, property~$(T)$ supplies a central projection
$p_\eta\in C^*(\Gamma)$ characterized by
\begin{equation}\label{eq:kazhdan-projection}
 \pi(p_\eta)=P_{\mathcal H_\pi(\eta)},
 \qquad
 \mathcal H_\pi(\eta)
 =\{\xi\in\mathcal H_\pi:\pi(g)\xi=\eta(g)\xi\ \forall g\in\Gamma\},
\end{equation}
for every unitary representation $\pi$ of $\Gamma$. Indeed, the usual
Kazhdan projection is obtained from the isolated trivial representation by
continuous functional calculus in the full group $C^*$-algebra. Applying the
automorphism
$\theta_\eta(u_g)=\overline{\eta(g)}u_g$ to that projection produces
$p_\eta$; see, for example, the spectral-gap formulation in
\cite{Ozawa2016}.

The next proposition is the point at which property~$(T)$ and the absence of
nontrivial finite-dimensional representations of $\Gamma_0$ enter the
coproduct.

\begin{proposition}\label{prop:coproduct-kazhdan}
For every $\eta\in\widehat H$,
\begin{equation}\label{eq:untwisted-kazhdan-coproduct}
 \Delta_0(p_\eta)
 =\sum_{\substack{\alpha,\beta\in\widehat H\\\alpha\beta=\eta}}
 p_\alpha\otimes p_\beta.
\end{equation}
Consequently,
\begin{equation}\label{eq:twisted-kazhdan-coproduct}
 \Delta_\Omega(p_\eta)
 =\sum_{\alpha\beta=\eta}p_\alpha\otimes p_\beta
 =\Sigma\Delta_\Omega(p_\eta),
\end{equation}
so $p_\eta\in Z_{\Delta_\Omega}(C^*(\Gamma))$.
\end{proposition}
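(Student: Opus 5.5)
The plan is to prove \eqref{eq:untwisted-kazhdan-coproduct} by testing both sides in a separating family of representations of the minimal tensor product. Then \eqref{eq:twisted-kazhdan-coproduct} will follow formally from centrality.

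\emph{Reduction to representations.} For unitary representations $\pi,\rho$ of $\Gamma$ (equivalently of $C^*(\Gamma)$), the maps $\pi\otimes\rho$ range over a family of representations of $C^*(\Gamma)\otimes C^*(\Gamma)$. If $\pi$ and $\rho$ are taken faithful, for instance universal representations, then $\pi\otimes\rho$ is faithful on the minimal tensor product. It therefore suffices to check
\[
 (\pi\otimes\rho)\Delta_0(p_\eta)=\sum_{\alpha\beta=\eta}\pi(p_\alpha)\otimes\rho(p_\beta)
\]
for all $\pi,\rho$. Now $(\pi\otimes\rho)\circ\Delta_0$ is the integrated form of the tensor product representation $g\mapsto\pi(g)\otimes\rho(g)$. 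By the characterization \eqref{eq:kazhdan-projection}, the left side is the projection onto
\[
 \mathcal K:=\{\xi\in\mathcal H_\pi\otimes\mathcal H_\rho:(\pi(g)\otimes\rho(g))\xi=\eta(g)\xi\ \ \forall g\in\Gamma\}.
\]
The right side is a sum of mutually orthogonal projections, because the $p_\alpha$ for distinct $\alpha$ are orthogonal: distinct characters have orthogonal eigenspaces. This sum is the projection onto $\bigoplus_{\alpha\beta=\eta}\mathcal H_\pi(\alpha)\otimes\mathcal H_\rho(\beta)$. The inclusion $\supseteq$ of this subspace in $\mathcal K$ is immediate, so everything reduces to the reverse inclusion.

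\emph{Main step (the expected obstacle).} Take $\xi\in\mathcal K$ and regard it, via $\mathcal H_\pi\otimes\mathcal H_\rho\cong\HS(\overline{\mathcal H_\rho},\mathcal H_\pi)$, as a Hilbert--Schmidt operator $T$. The eigen-equation translates into
\[
 \pi(g)\,T=\eta(g)\,T\,\bar\rho(g)\qquad (g\in\Gamma),
\]
that is, $T$ intertwines the representation $\eta\otimes\bar\rho$ with $\pi$. Then $TT^*$ is a compact positive operator commuting with $\pi(\Gamma)$. Each of its nonzero spectral subspaces is therefore finite-dimensional and $\pi$-invariant, and the closure of the range of $T$ is their closed span.

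By Lemma~\ref{lem:finite-dimensional-reps}, each such finite-dimensional subrepresentation of $\pi$ decomposes into characters $\alpha\in\widehat H$. Hence $\overline{\operatorname{ran}T}\subseteq\bigoplus_\alpha\mathcal H_\pi(\alpha)$. Applying the same argument to $T^*$ shows that the range of $T^*$ lies in $\bigoplus_\gamma\overline{\mathcal H_\rho}(\gamma)$. Decomposing $T$ into blocks and using the intertwining relation, the only surviving blocks pair $\mathcal H_\pi(\alpha)$ with $\mathcal H_\rho(\beta)$ for $\alpha\beta=\eta$. Translating back to $\xi$ gives the reverse inclusion. This is the only point where the absence of nontrivial finite-dimensional representations of $\Gamma_0$ is used, and it is where care is needed: compactness of $TT^*$ and the correct bookkeeping of conjugations.

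\emph{Twisted version.} The cocycle $\Omega$ lies in the image of $\C[H]\otimes\C[H]$, more precisely in the span of the $e_\alpha\otimes e_\beta$, which sits inside $C^*(\Gamma)\otimes C^*(\Gamma)$. Each $p_\alpha$ is central in $C^*(\Gamma)$, so $\Delta_0(p_\eta)=\sum p_\alpha\otimes p_\beta$ commutes with $\Omega$. Therefore
\[
 \Delta_\Omega(p_\eta)=\Omega\Delta_0(p_\eta)\Omega^*=\Delta_0(p_\eta).
\]
Applying $\Sigma$ turns the sum into $\sum_{\alpha\beta=\eta}p_\beta\otimes p_\alpha$. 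Since $H$, and hence $\widehat H$, is abelian, relabeling $\alpha\leftrightarrow\beta$ shows this is the same sum. So $p_\eta\in Z_{\Delta_\Omega}(C^*(\Gamma))$. Alternatively, one can note that $\Delta_0(p_\eta)$ commutes with $C$ and invoke Lemma~\ref{lem:twist-criterion}.
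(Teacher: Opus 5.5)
Your proposal is correct and follows essentially the same route as the paper: the Hilbert--Schmidt picture of an $\eta$-eigenvector, compactness of the associated positive operator commuting with the representation, Lemma~\ref{lem:finite-dimensional-reps} applied to its finite-dimensional spectral subspaces, testing in a faithful representation of the minimal tensor product, and centrality of the $p_\alpha$ for the twisted formula. The only difference is cosmetic: you place $\overline{\operatorname{ran}T}$ and $\overline{\operatorname{ran}T^*}$ directly in the isotypic sums using the spectral decompositions of $TT^*$ and $T^*T$, whereas the paper truncates to the finite-rank pieces $T_m=Tq_m$ and then passes to the Hilbert--Schmidt limit.
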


\begin{proof}
Let $\pi$ and $\rho$ be unitary representations of $\Gamma$. We first
identify the $\eta$-isotypic subspace of $\pi\otimes\rho$. Under the standard
unitary identification
\[
 \mathcal H_\pi\otimes\mathcal H_\rho
 \cong \HS(\overline{\mathcal H_\pi},\mathcal H_\rho),
\]
an $\eta$-eigenvector corresponds to a Hilbert--Schmidt operator $T$
satisfying
\begin{equation}\label{eq:intertwining-HS}
 \rho(g)T\overline{\pi(g)}^{\,*}=\eta(g)T
 \qquad(g\in\Gamma).
\end{equation}
It follows that $T^*T$ commutes with $\overline\pi(\Gamma)$. For $m\geq1$,
put
\[
 q_m=\mathbf1_{[1/m,\infty)}(T^*T),
 \qquad
 T_m=Tq_m.
\]
Since $T^*T$ is compact, $q_m$ has finite rank. It commutes with
$\overline\pi(\Gamma)$, and $T_m$ still satisfies
\eqref{eq:intertwining-HS}. Moreover,
\[
 \lVert T-T_m\rVert_{\HS}^2
 =\Tr((1-q_m)T^*T)\longrightarrow0.
\]
The spaces
\[
 q_m\overline{\mathcal H_\pi}
 \quad\text{and}\quad
 T_m(q_m\overline{\mathcal H_\pi})
\]
are finite-dimensional and invariant. By
Lemma~\ref{lem:finite-dimensional-reps}, they factor through $H$ and split
into characters in $\widehat H$. These characters are real-valued. If $v\in\mathcal H_\pi(\alpha)$ and
$\overline v\in q_m\overline{\mathcal H_\pi}$, then
$\overline{\pi(g)}^{\,*}\overline v=\alpha(g)\overline v$, so
\eqref{eq:intertwining-HS} gives
\[
 T_m\overline v\in\mathcal H_\rho(\eta\alpha^{-1}).
\]
Thus the tensor corresponding to $T_m$ lies in
$\bigoplus_{\alpha\beta=\eta}
\mathcal H_\pi(\alpha)\otimes\mathcal H_\rho(\beta)$.
This subspace is closed because $\widehat H$ is finite. Passing to the
Hilbert--Schmidt limit gives the same conclusion for $T$. Conversely, the
tensor product of an $\alpha$-eigenvector and a $\beta$-eigenvector is an
$\eta$-eigenvector whenever $\alpha\beta=\eta$. Therefore
\begin{equation}\label{eq:tensor-isotypic}
 \mathcal H_{\pi\otimes\rho}(\eta)
 =\bigoplus_{\alpha\beta=\eta}
 \mathcal H_\pi(\alpha)\otimes\mathcal H_\rho(\beta).
\end{equation}

Choose a faithful representation $\Pi$ of $C^*(\Gamma)$. The representation
$\Pi\otimes\Pi$ is faithful on the minimal tensor product. Apply
\eqref{eq:kazhdan-projection} to the representation
$(\Pi\otimes\Pi)\circ\Delta_0$ of $C^*(\Gamma)$. The image of the left-hand
side of \eqref{eq:untwisted-kazhdan-coproduct} under $\Pi\otimes\Pi$ is the
projection onto the left-hand side of \eqref{eq:tensor-isotypic}, while the
image of its right-hand side is the projection onto the right-hand side.
This proves \eqref{eq:untwisted-kazhdan-coproduct}.

Each $p_\alpha$ is central in $C^*(\Gamma)$, so every
$p_\alpha\otimes p_\beta$ commutes with $\Omega$. Thus the twist does not
change \eqref{eq:untwisted-kazhdan-coproduct}. Finally, the condition
$\alpha\beta=\eta$ is invariant under interchanging $\alpha$ and $\beta$,
which proves \eqref{eq:twisted-kazhdan-coproduct}.
\end{proof}

\begin{lemma}\label{lem:haar-state}
Let $\tau$ be the canonical group trace, so that
\[
 \tau(u_g)=\delta_{g,e}.
\]
Then $\tau$ is the Haar state of $(C^*(\Gamma),\Delta_\Omega)$.
Consequently, this compact quantum group is of Kac type. Its reduced algebra
is $C_r^*(\Gamma)$, and its reducing morphism is the regular representation
$\lambda:C^*(\Gamma)\to C_r^*(\Gamma)$.
Moreover,
\begin{equation}\label{eq:kazhdan-in-kernel}
 \lambda(p_\eta)=0
 \qquad(\eta\in\widehat H).
\end{equation}
\end{lemma}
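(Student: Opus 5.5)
To show that $\tau$ is the Haar state, I will verify the invariance identities $(\tau\otimes\id)\Delta_\Omega(x)=\tau(x)1=(\id\otimes\tau)\Delta_\Omega(x)$. By continuity and linearity it suffices to take $x=u_g$. The key structural fact is that $\Omega$ lies in $\C[H']\otimes\C[H']$, where $H'\subset\Gamma$ is the finite subgroup $\{e\}\times H$ of the semidirect product and $e_\alpha$ are its Fourier projections. Expanding $\Omega=\sum_{h,k}\Omega_{h,k}u_h\otimes u_k$ gives
\[
 \Delta_\Omega(u_g)=\sum_{h,k,h',k'}\Omega_{h,k}\overline{\Omega_{h',k'}}\,u_{hgh'^{-1}}\otimes u_{kgk'^{-1}}.
\]
Applying $\tau\otimes\id$ picks out the terms with $hgh'^{-1}=e$.

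If $g\notin H'$, there are no such terms, since $hgh'^{-1}\in H'$ would force $g\in H'$. So $(\tau\otimes\id)\Delta_\Omega(u_g)=0=\tau(u_g)$. If $g\in H'$, everything happens inside the finite group algebra $\C[H]$, where $\Delta_\Omega$ restricts to the twisted coproduct. On $\C[H]\cong C(\widehat H)$ the twist is a dual cocycle twist $\Delta_\Omega(e_\gamma)=\sum_{\alpha\beta=\gamma}e_\alpha\otimes e_\beta$. This is unchanged because $\Omega$ commutes with $\Delta_0(\C[H])$ ($H$ abelian). Hence $\tau|_{\C[H]}$, which is the normalized counting measure (Haar measure on $\widehat H$) with $\tau(e_\alpha)=1/4$, is invariant. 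Combining both cases gives two-sided invariance. A cleaner route I would actually write is: $\Delta_\Omega=\Ad\Omega\circ\Delta_0$, so
\[
 (\tau\otimes\id)\Delta_\Omega(x)=\sum_{\alpha,\alpha'}(\tau\otimes\id)\bigl((e_\alpha\otimes 1)\,\Omega'\Delta_0(x)\Omega'^*(e_{\alpha'}\otimes1)\bigr),
\]
together with the traciality of $\tau$. This lets one move $e_\alpha$ around, and since $\Omega$ is diagonal in the first leg, the phases cancel to leave $(\tau\otimes\id)\Delta_0(x)=\tau(x)1$. I expect this bookkeeping to be the only real obstacle. The first approach, splitting into $g\in H'$ and $g\notin H'$, is the safer one to write in full.

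Since $\tau$ is a trace, the Haar state is tracial, and this is exactly the Kac-type condition. The GNS representation of $\tau$ is the left regular representation, so $\Cr=C_r^*(\Gamma)$ and the reducing morphism is $\lambda$.

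For \eqref{eq:kazhdan-in-kernel}, apply \eqref{eq:kazhdan-projection} with $\pi=\lambda$. Then $\lambda(p_\eta)$ is the projection onto $\{\xi\in\ell^2\Gamma:\lambda_g\xi=\eta(g)\xi\}$. Such a $\xi$ satisfies $|\xi(g^{-1}s)|=|\xi(s)|$ for all $g$, so $|\xi|$ is constant on the infinite group $\Gamma$ (infinite because $\Gamma_0$ is infinite). Square-summability then forces $\xi=0$, so $\lambda(p_\eta)=0$.
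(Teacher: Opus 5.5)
Your proposal is correct and follows the paper's proof essentially step for step. The matching steps are the support argument for $g\notin H$ (a leg $u_{hgh'^{-1}}$ with $h,h'\in H$ cannot equal $e$), the observation that $\Omega$ commutes with $\Delta_0(\C[H])$ so that $\Delta_\Omega=\Delta_0$ on $\C[H]$, traciality of $\tau$ for the Kac property, the GNS representation of $\tau$ for the reduced algebra, and the constant-modulus argument in $\ell^2(\Gamma)$ for $\lambda(p_\eta)=0$. Your second ``cleaner route'' is unnecessary and loosely stated: $\Omega'$ is undefined, and the phases cancel only because, after traciality collapses the sum to the diagonal $\alpha=\alpha'$, the slice $(\tau\otimes\id)((e_\alpha\otimes1)\Delta_0(x))$ lies in $\C[H]$ and so commutes with the second-leg factor of $\Omega$. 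Writing out the case split, as you intend, is the right choice.
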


\begin{proof}
For $g\notin H$, every summand of
$\Omega(u_g\otimes u_g)\Omega^*$ has the form
$(e_\alpha u_g e_\mu)\otimes(e_\beta u_g e_\nu)$. The group support of each
tensor leg is contained in the double coset $HgH$, which does not contain
the identity. Therefore both Haar slices vanish:
\[
 (\tau\otimes\id)\Delta_\Omega(u_g)
 =0
 =(\id\otimes\tau)\Delta_\Omega(u_g).
\]
For $g\in H$, the tensor $u_g\otimes u_g$ commutes with $\Omega$, so
$\Delta_\Omega(u_g)=u_g\otimes u_g$, and the same two invariance identities
follow directly. Linearity and continuity prove that $\tau$ is both left and
right invariant. It is a trace, and its GNS representation is $\lambda$,
proving the first assertions.

The regular representation of the infinite group $\Gamma$ has no
$\eta$-eigenvector. Indeed, such a vector would have constant absolute value
on $\Gamma$, and hence could not belong to $\ell^2(\Gamma)$ unless it were
zero. Formula \eqref{eq:kazhdan-projection} now gives
\eqref{eq:kazhdan-in-kernel}.
\end{proof}

\begin{proposition}\label{prop:completions}
The polynomial Hopf $*$-algebra, universal completion, and reduced completion
of $\G_\Omega$ are, respectively,
\[
 \Pol(\G_\Omega)=\C[\Gamma],
 \qquad
 \Cu(\G_\Omega)=C^*(\Gamma),
 \qquad
 \Cr(\G_\Omega)=C_r^*(\Gamma).
\]
Moreover, $\G_\Omega$ is a compact matrix quantum group.
\end{proposition}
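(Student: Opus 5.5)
The plan is to identify $\Pol(\G_\Omega)$ first, and then read off both completions from it using standard facts.

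\emph{Step 1: $\C[\Gamma]\subseteq\Pol(\G_\Omega)$.} The polynomial algebra is the span of matrix coefficients of finite-dimensional unitary corepresentations, so it suffices to exhibit each $u_g$ as such a coefficient. Fix $g\in\Gamma$ and let $V_g=\Span\{e_\alpha u_g e_\beta:\alpha,\beta\in\widehat H\}$; this is finite-dimensional. Since $\Omega$ is built from the $e_\alpha$,
\[
 \Delta_\Omega(u_g)=\sum_{\alpha,\beta,\mu,\nu}\omega(\alpha,\beta)\overline{\omega(\mu,\nu)}\,e_\alpha u_g e_\mu\otimes e_\beta u_g e_\nu .
\]
The same expansion shows that $\Delta_\Omega(e_\alpha u_g e_\beta)$ lies in $V_g'\otimes V_g'$, where $V_g'=\Span\{e_\alpha u_h e_\beta: h\in HgH\}$. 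This uses $\Delta_\Omega(e_\alpha)=\Delta_0(e_\alpha)$ (because $\Delta_0(e_\alpha)$ commutes with $\Omega$) together with $\Delta_0(e_\alpha)=\sum_{\beta\gamma=\alpha}e_\beta\otimes e_\gamma$. The space $W_g=\C[HgH]$ is finite-dimensional, and I will show it is a subcoalgebra of $(\C[\Gamma],\Delta_\Omega)$. Writing $\Omega$ in the basis $u_h\otimes u_k$ with $h,k\in H$, one gets $\Delta_\Omega(u_x)\in W_g\otimes W_g$ for $x\in HgH$.

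Every finite-dimensional subcoalgebra of a compact quantum group's Hopf $*$-algebra is spanned by coefficients of a finite-dimensional corepresentation. To get unitarity, take $W_g+W_g^*$ (note $W_g^*=W_{g^{-1}}$), which is a $*$-invariant subcoalgebra, and unitarize via the Haar state $\tau$ of Lemma~\ref{lem:haar-state}. Hence $u_g\in\Pol(\G_\Omega)$.

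\emph{Step 2: the reverse inclusion.} $(\C[\Gamma],\Delta_\Omega)$ is a dense Hopf $*$-subalgebra, already known from the Drinfel'd twist discussion. A dense Hopf $*$-subalgebra spanned by corepresentation coefficients must equal $\Pol$. To see this, use the Haar state $\tau$, which is faithful on $\C[\Gamma]$. Orthogonality relations show that any irreducible corepresentation outside $\C[\Gamma]$ would have coefficients orthogonal to the dense subspace $\C[\Gamma]$ in $L^2(\tau)$, and would therefore vanish. So $\Pol(\G_\Omega)=\C[\Gamma]$.

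\emph{Step 3: the completions and the matrix property.} The universal completion is the enveloping $C^*$-algebra of $\Pol(\G_\Omega)=\C[\Gamma]$ with its unchanged $*$-algebra structure, which is $C^*(\Gamma)$. The reduced completion is the GNS completion for $\tau$, which Lemma~\ref{lem:haar-state} identifies as $C_r^*(\Gamma)$. For the compact matrix structure, take a finite generating set $S$ of $\Gamma$ with $S=S^{-1}$, and let $U$ be the direct sum of the corepresentations from Step 1 attached to $W_s$, $s\in S$. Its coefficients contain all $u_s$, and so generate $\C[\Gamma]$ as a $*$-algebra. Hence they generate $C^*(\Gamma)$.

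I expect the main obstacle to be Step 1. The subcoalgebra claim for $W_g$ needs care with the expansion of $\Omega$ in group elements of $H$, and producing a genuinely unitary corepresentation needs the Haar-state unitarization. The fallback is the general fact that $\Pol$ equals the union of all finite-dimensional subcoalgebras of any dense Hopf $*$-subalgebra containing them.
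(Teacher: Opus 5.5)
Your proof is correct. Its outer structure matches the paper's: identify $\Pol(\G_\Omega)$, read off the enveloping and GNS completions, then use a finite generating set. The difference is how you obtain $\Pol(\G_\Omega)=\C[\Gamma]$.

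The paper computes $\tau(x^*x)=\sum_g|a_g|^2$ and cites the Peter--Weyl theorem for Hopf $*$-algebras with positive faithful Haar functional. That one citation packages local finiteness and unitarization, and it leaves implicit that a dense subcoalgebra of $\Pol(\G_\Omega)$ must be all of it.

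You instead exhibit explicit finite-dimensional subcoalgebras $\C[HgH]$. This works because $\Omega\in\C[H]\otimes\C[H]$, so both legs of $\Delta_\Omega(u_x)$ are supported in $HxH$. You unitarize there using $\tau$, and prove the reverse inclusion by orthogonality relations and density. The $V_g$ detour and the passage to $W_g+W_g^*$ are unnecessary. The comodule matrix on $W_g$ is already invertible via the twisted antipode, and it is unitarizable by invariance of $\tau$ and positivity of $\tau$ on $\C[\Gamma]$.

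Your route gives an explicit bound $\dim W_g\le|H|^2$, an explicit fundamental corepresentation for the compact-matrix claim, and a written-out maximality step. The paper's route gives brevity.

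Two points in Step 2 should be tightened. First, passing from ``$U$ lies outside $\C[\Gamma]$'' to ``$U$'s coefficients are orthogonal to $\C[\Gamma]$'' uses cosemisimplicity. The subcoalgebra $\C[\Gamma]\subseteq\Pol(\G_\Omega)$ is the sum of the simple coefficient coalgebras it contains, and a simple coefficient coalgebra meets it either fully or trivially.

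Second, faithfulness of $\tau$ on $\C[\Gamma]$ is not what makes $L^2(\tau)$-null coefficients vanish. In this very example $\tau$ is not faithful on $C^*(\Gamma)$, since $\tau(p_\eta)=0$ while $p_\eta\neq0$. You need faithfulness of the Haar state on $\Pol(\G_\Omega)$, where the hypothetical coefficients live, or directly the positivity of $h(u_{ii}^*u_{ii})$ from the orthogonality relations for a unitary irreducible $U$.
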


\begin{proof}
For $x=\sum_g a_g u_g\in\C[\Gamma]$, one has
\[
 \tau(x^*x)=\sum_g|a_g|^2.
\]
Thus the algebraic Haar functional is positive and faithful on
$\C[\Gamma]$. The Peter--Weyl theorem for Hopf $*$-algebras with positive
Haar functional identifies the polynomial algebra as
$\Pol(\G_\Omega)=\C[\Gamma]$; compare \cite[Section~5]{Woronowicz1987}.

Twisting changes neither multiplication nor involution. The universal
$C^*$-norm on $\Pol(\G_\Omega)$ is therefore the enveloping $C^*$-norm of
the group $*$-algebra $\C[\Gamma]$. Its $*$-representations are exactly the
integrated unitary representations of $\Gamma$, so this norm is the full
group $C^*$-norm. Hence $\Cu(\G_\Omega)=C^*(\Gamma)$. The GNS
representation of $\tau$ is the left regular representation, which gives
$\Cr(\G_\Omega)=C_r^*(\Gamma)$.

Finally, choose a finite generating set of $\Gamma$. Each corresponding
group unitary belongs to a finite sum of coefficient coalgebras of
$\Pol(\G_\Omega)$. The direct sum of the finitely many unitary
corepresentations so obtained is finite-dimensional, and its coefficients
generate $\C[\Gamma]$ as a $*$-algebra. Thus $\G_\Omega$ is a compact
matrix quantum group.
\end{proof}

Put $\Omega_r=(\lambda\otimes\lambda)(\Omega)$. The reduced coproduct is
characterized by
\begin{equation}\label{eq:reduced-coproduct}
 \Delta_\Omega^{\mathrm r}\circ\lambda
 =(\lambda\otimes\lambda)\circ\Delta_\Omega,
 \qquad
 \Delta_\Omega^{\mathrm r}(\lambda_g)
 =\Omega_r(\lambda_g\otimes\lambda_g)\Omega_r^*.
\end{equation}
Since $\lambda$ is injective on $\C[\Gamma]$, the universal and reduced forms
have the same algebraic core and canonically identified irreducible
corepresentations and characters. The compact quantum group is not
cocommutative; this will follow from the calculation in
Proposition~\ref{prop:corner}.

\section{The algebraic corner}

Set
\begin{equation}\label{eq:Zalg}
 Z_{\alg}
 =\{x\in\C[\Gamma]:[\Delta_0(x),C]=0\}
 =Z_{\Delta_\Omega}(\C[\Gamma]),
\end{equation}
where the second equality is Lemma~\ref{lem:twist-criterion}. Fix a
nontrivial $\eta\in\widehat H$. Nondegeneracy of $c$ gives a unique nonzero
element $k=k_\eta\in H$ such that
\begin{equation}\label{eq:k-characterization}
 c(\delta,\eta)=\delta(k)
 \qquad(\delta\in\widehat H).
\end{equation}
Let
\begin{equation}\label{eq:Nk}
 N^k=\{n\in N:k\cdot n=n\}.
\end{equation}
Because a nonzero element of $H$ acts on the four coordinates in two
two-cycles, $N^k\cong\Gamma_0^2$ and $[N:N^k]=\infty$. Since $H$ is
abelian, $N^k$ is invariant under the action of $H$.

The following identity is the main algebraic computation.

\begin{proposition}[Corner identity]\label{prop:corner}
With the notation above,
\begin{equation}\label{eq:corner-identity}
 e_\eta Z_{\alg}e_\eta=e_\eta\C[N^k]e_\eta.
\end{equation}
\end{proposition}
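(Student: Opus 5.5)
The plan is to first rewrite the commutator element $C$ in a form adapted to the group structure, and then prove the two inclusions of \eqref{eq:corner-identity} separately.

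\emph{Step 1: rewriting $C$.} By \eqref{eq:k-characterization} and nondegeneracy of $c$, each $\beta\in\widehat H$ determines a unique $k_\beta\in H$ with $c(\delta,\beta)=\delta(k_\beta)$ for all $\delta\in\widehat H$, and $\beta\mapsto k_\beta$ is a group isomorphism $\widehat H\to H$. Inverting \eqref{eq:fourier-projections} gives $\sum_\delta\delta(h)e_\delta=u_h$. Hence
\[
 C=\sum_{\beta}\Bigl(\sum_\delta\delta(k_\beta)e_\delta\Bigr)\otimes e_\beta
 =\sum_\beta u_{k_\beta}\otimes e_\beta ,
\]
and, since $c$ is $\pm1$-valued and alternating, symmetrically $C=\sum_\alpha e_\alpha\otimes u_{k_\alpha}$. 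So the condition $[\Delta_0(x),C]=0$ says that $\Delta_0(x)$ commutes with a ``controlled translation'': translate the first leg by $k_\beta$ on the $\beta$-isotypic part of the second leg. Because $H$ is abelian, $u_h$ commutes with every $e_\beta$, and $\Delta_0(e_\eta)=\sum_{\alpha\beta=\eta}e_\alpha\otimes e_\beta$.

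\emph{Step 2: the inclusion $\supseteq$.} I would show directly that $e_\eta u_n e_\eta\in Z_{\alg}$ for every $n\in N^k$, where $k=k_\eta$. Expand
\[
 \Delta_0(e_\eta u_n e_\eta)=\sum_{\alpha\beta=\eta,\ \alpha'\beta'=\eta}(e_\alpha u_n e_{\alpha'})\otimes(e_\beta u_n e_{\beta'}).
\]
Then commute each summand past $\sum_\beta u_{k_\beta}\otimes e_\beta$. The key input is that $n$ is $k$-fixed, so $u_k$ commutes with $u_n$. It also matters how $e_\beta u_n e_{\beta'}$ changes $\beta$: writing $u_n=\frac{1}{|H|}\sum$ over $H$-conjugates and using that $N^k$ is $H$-invariant, the change in the $\widehat H$-label is a multiple of $\eta$. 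Together these should give $c(\beta,\cdot)$-factors that cancel exactly when the labels differ by $\eta$ (since $c(\eta,\eta)=1$). This is a finite bookkeeping check that I expect to close.

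\emph{Step 3: the inclusion $\subseteq$, the main obstacle.} Take $x=\sum_g a_gu_g\in Z_{\alg}$. Averaging the identity $C\Delta_0(x)C^*=\Delta_0(x)$ against slice maps by $\tau(u_{h}^*\,\cdot\,)$ in the second leg, I would extract coefficient identities relating $a_g$ and $a_{k_\beta g k_\beta^{-1}}$ together with the $H$-Fourier components of $x$. The aim is to show that after compression by $e_\eta$ only the components supported on $N^kH$ survive, with the $H$-part absorbed into $e_\eta$ since $e_\eta u_h=\eta(h)e_\eta$. I expect this to be the delicate step: the $e_\beta$ are not preserved by $u_n$ for $n\in N$, so the second-leg isotypic decomposition mixes under $\Delta_0(u_n)$. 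I would first try to handle this by decomposing $\C[\Gamma]=\bigoplus_{HnH}\C[HnH]$ into $H$-double cosets. Each block is finite-dimensional and invariant under left and right multiplication by $\C[H]$, so the condition reduces to a finite linear-algebra problem on $\C[HnH]\otimes\C[HnH]$ for each $n$. There I would verify that a nonzero compressed component forces $k\cdot n=n$. As a fallback, I would evaluate on the induced representation $\Ind_{N^k\rtimes H}^{\Gamma}$ attached to $N^k$, which is the device the paper uses later.
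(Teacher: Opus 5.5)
Your Steps~1 and~2 are correct and match the paper's block computation. With $C=\sum_\beta u_{k_\beta}\otimes e_\beta$, a summand $(e_\alpha u_ne_\mu)\otimes(e_\beta u_ne_\nu)$ of $\Delta_0(e_\eta u_ne_\eta)$ is multiplied by $c(\alpha,\beta)$ on the left and by $c(\mu,\nu)$ on the right, and the ratio is $c(\alpha\mu^{-1},\eta)=(\alpha\mu^{-1})(k)$. Two small corrections: the detour through ``the label changes by a multiple of $\eta$'' is unnecessary, and $u_n$ is not an average of its $H$-conjugates. It suffices to note that $e_\alpha u_ne_\mu=e_\alpha u_ku_nu_k^*e_\mu=(\alpha\mu^{-1})(k)\,e_\alpha u_ne_\mu$ when $k\cdot n=n$.

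The genuine gap is Step~3, which carries the substantive inclusion $e_\eta Z_{\alg}e_\eta\subseteq e_\eta\C[N^k]e_\eta$. Your sentence ``there I would verify that a nonzero compressed component forces $k\cdot n=n$'' restates what must be proved rather than proving it. The slice-map idea does not give relations between $a_g$ and $a_{k_\beta gk_\beta^{-1}}$ alone, because $C\Delta_0(u_g)C^*$ contains cross terms $u_{k_\beta}u_gu_{k_{\beta'}}^*\otimes e_\beta u_ge_{\beta'}$ with $\beta\neq\beta'$. The induced-representation fallback is not a route to this statement at all: in Section~5 that state is shown to agree with $\eta$ on $Z_{\alg}$ only \emph{by means of} the corner identity.

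Two ingredients are missing, and the paper supplies both.

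\emph{Reduction to single elements.} $Z_{\alg}$ is the preimage under $\Delta_0$ of the commutant of $C$, hence a unital subalgebra. It contains $e_\eta$, since $\Delta_0(e_\eta)\in\C[H]\otimes\C[H]$ commutes with $C$, so $e_\eta Z_{\alg}e_\eta\subseteq Z_{\alg}$. Moreover $e_\eta\C[HnH]e_\eta=\C E_n$ with $E_n=e_\eta u_ne_\eta\neq0$, because $e_\eta u_h=\eta(h)e_\eta$ and $E_{a\cdot n}=E_n$. Combined with your double-coset splitting, the question becomes: for which $n$ is $E_n\in Z_{\alg}$?

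\emph{A nonvanishing statement.} This is exactly what controls the mixing you worried about, and your Step~2 never needs it: $e_\alpha u_ne_\mu\neq0$ whenever $\alpha\mu^{-1}\in H_n^\perp$, since its coefficient of $u_n$ is then $|H_n|/|H|^2$. Now suppose $n\notin N^k$. Then $k\notin H_n=(H_n^\perp)^\perp$, so some $\delta\in H_n^\perp$ has $\delta(k)=-1$. For any $\mu$, put $\alpha=\delta\mu$, $\beta=\alpha^{-1}\eta$, $\nu=\mu^{-1}\eta$.
\begin{itemize}
\item Both legs of the corresponding block are nonzero, since $\alpha\mu^{-1}=\delta$ and $\beta\nu^{-1}=\delta^{-1}$ lie in $H_n^\perp$.
\item Its left and right $C$-eigenvalues differ by the factor $\delta(k)=-1$.
\item The compressions $X\mapsto(e_\alpha\otimes e_\beta)X(e_\mu\otimes e_\nu)$ separate blocks.
\end{itemize}
Hence $[\Delta_0(E_n),C]\neq0$, so $E_n\notin Z_{\alg}$ for every $n\notin N^k$. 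Together with the support disjointness of distinct double cosets, this gives the missing inclusion.
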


\begin{proof}
For $n\in N$, let
\[
 H_n=\{h\in H:h\cdot n=n\},
 \qquad
 H_n^\perp=\{\delta\in\widehat H:\delta|_{H_n}=1\}.
\]
We first record the elementary support test
\begin{equation}\label{eq:support-test}
 e_\alpha u_n e_\mu\neq0
 \quad\Longleftrightarrow\quad
 \alpha\mu^{-1}\in H_n^\perp.
\end{equation}
For necessity, take $h\in H_n$. Since $u_hu_n=u_nu_h$, left and right
multiplication by $u_h$ show that a nonzero $e_\alpha u_n e_\mu$ requires
$\alpha(h)=\mu(h)$. Conversely, expanding both Fourier projections using
\eqref{eq:fourier-projections} shows that the coefficient of $u_n$ in
$e_\alpha u_n e_\mu$ is
\[
 \frac1{|H|^2}\sum_{a\in H_n}\overline{\alpha(a)}\mu(a)
 =\frac1{|H|^2}\sum_{a\in H_n}(\alpha\mu^{-1})(a).
\]
If $\alpha\mu^{-1}$ is trivial on $H_n$, this coefficient is exactly
$|H_n|/|H|^2$, and in particular is nonzero. This proves
\eqref{eq:support-test}.

Put $E_n=e_\eta u_n e_\eta$. The element $E_n$ is nonzero, and for
$a,h\in H$ one has
\[
 e_\eta u_{(n,h)}e_\eta=\eta(h)E_n,
 \qquad
 E_{a\cdot n}=e_\eta u_a u_nu_a^*e_\eta=E_n.
\]
Moreover,
\begin{equation}\label{eq:En-expansion}
 E_n=\frac1{|H|^2}\sum_{a,b\in H}
 \overline{\eta(a)\eta(b)}\,u_{(a\cdot n,a+b)}.
\end{equation}
For $m\in H\cdot n$ and $r\in H$, the coefficient of $u_{(m,r)}$ in this
expansion is
\[
 \frac{|H_n|}{|H|^2}\eta(r),
\]
and it is zero if $m\notin H\cdot n$. Thus the support of $E_n$ is exactly
the double coset $Hu_nH$. Double cosets corresponding to distinct $H$-orbits
in $N$ are disjoint. It follows from the first displayed identity that the
$E_n$, with one representative from each $H$-orbit, form a basis of
$e_\eta\C[\Gamma]e_\eta$.

The Fourier coproduct formula
$\Delta_0(e_\eta)=\sum_{\alpha\beta=\eta}e_\alpha\otimes e_\beta$ gives
\begin{equation}\label{eq:En-coproduct}
 \Delta_0(E_n)
 =\sum_{\substack{\alpha\beta=\eta\\\mu\nu=\eta}}
 (e_\alpha u_ne_\mu)\otimes(e_\beta u_ne_\nu).
\end{equation}
The summand indexed by $(\alpha,\beta;\mu,\nu)$ is a simultaneous left-right
matrix block for $C$. Its left eigenvalue is $c(\alpha,\beta)$, and its right
eigenvalue is $c(\mu,\nu)$. If $\delta=\alpha\mu^{-1}$, then
\eqref{eq:support-test} shows that the block is nonzero exactly when
$\delta\in H_n^\perp$; the second tensor factor gives the same condition
because $\beta\nu^{-1}=\delta^{-1}$. Bicharacter calculus and
$\alpha\beta=\mu\nu=\eta$ give
\begin{equation}\label{eq:block-ratio}
 \frac{c(\alpha,\beta)}{c(\mu,\nu)}
 =c(\delta,\eta)=\delta(k).
\end{equation}
Every $\delta\in H_n^\perp$ occurs in a nonzero block. Indeed, for any
$\mu\in\widehat H$, set
\[
 \alpha=\delta\mu,
 \qquad
 \beta=\alpha^{-1}\eta,
 \qquad
 \nu=\mu^{-1}\eta.
\]
Both factors in the resulting block are nonzero by
\eqref{eq:support-test}. The orthogonality of the Fourier blocks in
\eqref{eq:En-coproduct} now implies
\[
\begin{aligned}
 E_n\in Z_{\alg}
 &\Longleftrightarrow \delta(k)=1
    \quad\text{for every }\delta\in H_n^\perp\\
 &\Longleftrightarrow k\in(H_n^\perp)^\perp=H_n\\
 &\Longleftrightarrow n\in N^k.
\end{aligned}
\]

Finally, $Z_{\alg}$ is a unital $*$-subalgebra: by
Lemma~\ref{lem:twist-criterion}, it is the inverse image under $\Delta_0$ of
the commutant of $C$. Also $e_\eta\in Z_{\alg}$, as follows either from its
Fourier coproduct formula or from the same block calculation. Hence
$e_\eta xe_\eta\in Z_{\alg}$ whenever $x\in Z_{\alg}$.

For an orbit $O=H\cdot n$, put $D_O=Hu_nH$. Since
$C\in\C[H]\otimes\C[H]$, the group-basis support satisfies
\[
 \supp([\Delta_0(E_n),C])\subseteq D_O\times D_O.
\]
The sets $D_O\times D_O$ are disjoint for distinct orbits $O$.
Consequently, if $\sum_O t_OE_O\in Z_{\alg}$, then
\[
 t_O[\Delta_0(E_O),C]=0
\]
for every $O$ separately. The criterion already proved for $E_n$ therefore
shows that a vector in $e_\eta Z_{\alg}e_\eta$ is a linear combination
precisely of those $E_n$ with $n\in N^k$. Conversely, each such $E_n$ belongs
to $Z_{\alg}$ and satisfies $e_\eta E_ne_\eta=E_n$. This proves
\eqref{eq:corner-identity}.
\end{proof}

Taking $n\notin N^k$ in the proof gives
$E_n\notin Z_{\Delta_\Omega}(\C[\Gamma])$. Thus
$\Delta_\Omega\neq\Sigma\Delta_\Omega$, as asserted in
Theorem~\ref{thm:main}(i).

\section{The separating functional}

Let
\begin{equation}\label{eq:L-subgroup}
 L=N^k\rtimes H<\Gamma,
 \qquad
 \eta_L(n,h)=\eta(h)
 \quad((n,h)\in L).
\end{equation}
The same formula defines a character, still denoted $\eta$, on all of
$\Gamma$. Consider the induced representation
\begin{equation}\label{eq:induced-representation}
 \pi=\Ind_L^\Gamma(\eta_L)
\end{equation}
in the right-coset model
\[
 \mathcal H_\pi
 =\left\{
 f:\Gamma\to\C:
 f(g\ell)=\eta_L(\ell)^{-1}f(g),\quad
 \sum_{gL\in\Gamma/L}|f(g)|^2<\infty
 \right\}.
\]
The action is $(\pi(s)f)(g)=f(s^{-1}g)$. Let $\xi$ be the unit vector
supported on $L$, normalized by
$\xi(\ell)=\eta_L(\ell)^{-1}$ for $\ell\in L$. Then
$\pi(\ell)\xi=\eta_L(\ell)\xi$ for $\ell\in L$. Since every value of
$\eta_L$ is real, the vector state
$\varphi(x)=\langle\pi(x)\xi,\xi\rangle$ satisfies
\begin{equation}\label{eq:vector-state}
 \varphi(u_g)=
 \begin{cases}
  \eta_L(g),&g\in L,\\
  0,&g\notin L.
 \end{cases}
\end{equation}

\begin{proposition}\label{prop:separating-functional}
The bounded functional
\[
 \ell=\eta-\varphi\in C^*(\Gamma)^*
\]
has norm $2$, annihilates $Z_{\alg}$, and satisfies $\ell(p_\eta)=1$.
\end{proposition}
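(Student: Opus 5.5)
The plan is to verify the three assertions separately. The key inputs are the characterization \eqref{eq:kazhdan-projection} of $p_\eta$, the corner identity of Proposition~\ref{prop:corner}, and the explicit formula \eqref{eq:vector-state} for $\varphi$. Throughout, $\eta$ denotes the one-dimensional representation $(n,h)\mapsto\eta(h)$ of $\Gamma$, integrated to a character of $C^*(\Gamma)$.

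\emph{Value on $p_\eta$.} In the one-dimensional representation $\eta$, the whole space is the $\eta$-eigenspace, so \eqref{eq:kazhdan-projection} gives $\eta(p_\eta)=1$. For $\varphi$, I will show that $\mathcal H_\pi(\eta)=0$, so that $\pi(p_\eta)=0$ and hence $\varphi(p_\eta)=0$. Suppose $f\in\mathcal H_\pi$ satisfies $\pi(s)f=\eta(s)f$ for all $s\in\Gamma$. Then $|f(s^{-1}g)|=|f(g)|$, so $|f|$ is constant on $\Gamma$. Because the values of $\eta_L$ are $\pm1$, the quantity $|f|$ is a well-defined function on $\Gamma/L$, and it is constant there. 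The index $[\Gamma:L]\geq[N:N^k]=\infty$ is infinite, so square-summability over $\Gamma/L$ forces $f=0$. Therefore $\ell(p_\eta)=1-0=1$.

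\emph{Norm.} Since $\eta$ and $\varphi$ are states, $\lVert\ell\rVert\leq2$. For the reverse inequality, I test $\ell$ on the self-adjoint unitary $x=2p_\eta-1$, which has norm $1$. We get $\eta(x)=1$ and $\varphi(x)=-1$, so $\ell(x)=2$ and $\lVert\ell\rVert=2$.

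\emph{Annihilation of $Z_{\alg}$.} First I reduce both functionals to the corner $e_\eta\C[\Gamma]e_\eta$.
\begin{itemize}
\item Since $\eta$ is multiplicative and $\eta(e_\eta)=1$ by \eqref{eq:fourier-projections}, we have $\eta(x)=\eta(e_\eta xe_\eta)$.
\item Since $H\subset L$, we have $\pi(u_h)\xi=\eta(h)\xi$ for all $h\in H$, so $\pi(e_\eta)\xi=\xi$. Hence $\varphi(x)=\langle\pi(x)\pi(e_\eta)\xi,\pi(e_\eta)\xi\rangle=\varphi(e_\eta xe_\eta)$.
\end{itemize}
Now take $x\in Z_{\alg}$. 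By Proposition~\ref{prop:corner}, $e_\eta xe_\eta$ is a linear combination of elements $E_n=e_\eta u_ne_\eta$ with $n\in N^k$. It therefore suffices to check $\eta(E_n)=\varphi(E_n)$ for such $n$. By the previous reduction, $\eta(E_n)=\eta(u_n)=1$, because $\eta$ is trivial on $N$. Likewise $\varphi(E_n)=\varphi(u_n)=\eta_L(n)=1$ by \eqref{eq:vector-state}, because $n\in N^k\subset L$. Thus $\ell$ vanishes on $Z_{\alg}$.

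The only step needing genuine care is the absence of $\eta$-eigenvectors in the induced representation. It rests on the infinite index of $L$, which comes from $[N:N^k]=\infty$. Everything else is a direct consequence of the corner identity.
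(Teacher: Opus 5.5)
Your proof is correct and follows essentially the same route as the paper: you compress both states to the corner $e_\eta\C[\Gamma]e_\eta$, use Proposition~\ref{prop:corner} to reduce to elements supported in $L$, test the norm on $2p_\eta-1$, and use infinite index to rule out $\eta$-eigenvectors in $\pi$. The only difference is cosmetic: you show $\mathcal H_\pi(\eta)=0$ directly, from the constant modulus of an eigenvector over the infinite space $\Gamma/L$, whereas the paper restricts $\pi$ to $N$, identifies it with the quasi-regular representation on $\ell^2(N/N^k)$, and uses the absence of $N$-invariant vectors.
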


\begin{proof}
Both states take the value one at the projection $e_\eta$: this is immediate
for $\eta$, and for $\varphi$ it follows from \eqref{eq:vector-state} because
$H\subset L$. A state $\psi$ with $\psi(e)=1$ for a projection $e$ satisfies
$\psi(x)=\psi(exe)$, by the Cauchy--Schwarz inequality. Hence
\[
 \eta(x)=\eta(e_\eta xe_\eta),
 \qquad
 \varphi(x)=\varphi(e_\eta xe_\eta).
\]
For $x\in Z_{\alg}$, Proposition~\ref{prop:corner} places
$e_\eta xe_\eta$ in $e_\eta\C[N^k]e_\eta\subset\C[L]$.
Formula \eqref{eq:vector-state} shows that $\eta$ and $\varphi$ agree on
$\C[L]$. Thus $\ell(x)=0$ for every $x\in Z_{\alg}$.

The map
\[
 N/N^k\longrightarrow\Gamma/L,
 \qquad
 nN^k\longmapsto nL,
\]
is an $N$-equivariant bijection: it is surjective because $\Gamma=NL$, and
the stabilizer of $L$ in $N$ is $N\cap L=N^k$. Since $\eta_L$ is trivial on
$N^k$, the restriction of $\pi$ to $N$ is therefore the ordinary
quasi-regular representation on $\ell^2(N/N^k)$. As $[N:N^k]=\infty$, it
has no nonzero $N$-invariant vector: such a vector would be constant on the
infinite transitive coset space. Every $\eta$-eigenvector for $\Gamma$ is
$N$-invariant, so $\pi$ contains no copy of $\eta$. The defining property
\eqref{eq:kazhdan-projection} yields
\[
 \varphi(p_\eta)=0,
 \qquad
 \eta(p_\eta)=1,
\]
and hence $\ell(p_\eta)=1$.

As a difference of states, $\lVert\ell\rVert\leq2$. On the self-adjoint
unitary $2p_\eta-1$, however,
\[
 \ell(2p_\eta-1)=2.
\]
Therefore $\lVert\ell\rVert=2$.
\end{proof}

\begin{proof}[Proof of Theorem~\ref{thm:main}]
The constructions of Sections~2 and~3 give the compact matrix quantum group.
Its Kac property and reducing morphism are identified in
Lemma~\ref{lem:haar-state}; noncocommutativity follows from the observation
after Proposition~\ref{prop:corner}. Proposition~\ref{prop:coproduct-kazhdan}
proves the cocommutativity of $p_\eta$, and
\eqref{eq:kazhdan-in-kernel} places it in the Haar kernel.

By Lemma~\ref{lem:algebraic-cocommutativity}, every irreducible character of
the twisted compact quantum group belongs to $Z_{\alg}$.
Proposition~\ref{prop:separating-functional} therefore gives
$\ell|_{\mathcal K_\Omega}=0$. For every $y\in\mathcal K_\Omega$,
\[
 1=|\ell(p_\eta-y)|
 \leq\lVert\ell\rVert\,\lVert p_\eta-y\rVert
 =2\lVert p_\eta-y\rVert.
\]
Taking the infimum over $y$ proves
$\dist(p_\eta,\mathcal K_\Omega)\geq\frac12$ and completes the proof.
\end{proof}

\section{The universal--reduced boundary}

The counterexample is located exactly in the kernel of the reducing
morphism. Indeed, Lemma~\ref{lem:haar-state} gives
\[
 p_\eta\in Z_{\Delta_\Omega}(C^*(\Gamma))\cap\ker\lambda.
\]
After reduction, this element vanishes. Moreover, the reduced compact
quantum group has faithful Haar state, and the reduced character-density
theorem \cite[Corollary~3.8]{AlaghmandanCrann2017} applies:
\[
 Z_{\Delta_\Omega^{\mathrm r}}(C_r^*(\Gamma))
 =\overline{\Span}\{\chi_U^{\mathrm r}:U\in\Irr(\G_\Omega)\}.
\]
Thus this construction gives a universal counterexample and, at the same
time, a sharp explanation of why its reduced image cannot be one. It shows
that the algebraic Peter--Weyl core and the reduced completion do not control
all cocommutative elements of the universal completion. In particular, Kac
symmetry alone does not imply universal character density when the Haar state
is not faithful on the universal $C^*$-algebra.

\section*{Acknowledgments and statement on AI assistance}

The author thanks the DeepMath team for its agent support. The initial ideas
and inspiration for this work came from the author and were further developed
through discussions with DeepMath agents. GPT models were then used to carry
out the constructions and generate the manuscript text. The resulting
arguments were subjected to adversarial review in multiple separate
conversations. Finally, the author has manually checked the present version
and concluded that its arguments are sound.

AI-assisted checking of this version has been completed. A further independent,
line-by-line human verification and the preparation of a separately written,
human-authored version are currently in progress. The author welcomes
qualified researchers who are prepared to make a substantial scholarly
contribution to the independent verification, correction, or rewriting of the
work. Any person later named as a coauthor will be included only after making
such a contribution, approving the final manuscript, and accepting
responsibility for its contents.

\end{document}